\theoremstyle{plain}
\newtheorem{theorem}{Theorem}[section]
\theoremstyle{definition}
\newtheorem{definition}[theorem]{Definition}
\newtheorem{remark}[theorem]{Remark}
\theoremstyle{remark}
\newcommand{\E}{\mathcal{E}_1}
\newcommand{\Haus}{D}
\newcommand{\capder}[2]{{}^c D_{#1+}^{#2}}
\newcommand{\R}{\mathbb{R}}
\begin{document}
	\title{Extended Mittag--Leffler Stability Results for Fuzzy Fractional Systems}
	
	\author{
		{ \sc Es-said En-naoui } \\ 
		University Sultan Moulay Slimane\\ Morocco\\
		essaidennaoui1@gmail.com\\
		\\
	}
	
	\maketitle
	
	\begin{abstract}
		We extend Lyapunov--type Mittag--Leffler stability analysis for fuzzy nonlinear fractional differential equations (Caputo sense) and introduce a family of stronger, broadly applicable stability results. In particular, we develop (i) uniform Mittag--Leffler stability for variable-order Caputo derivatives, (ii) input-to-state stability (ISS) in the Mittag--Leffler sense and explicit robustness/ultimate bound estimates, (iii) a converse Mittag--Leffler Lyapunov theorem, (iv) a fractional LaSalle invariance principle adapted to the fuzzy setting, and (v) practical and computational criteria including Lyapunov--Krasovskii functionals for delay systems and levelwise LMI tests for linearized models. The paper presents precise assumptions, proof sketches, and preparatory lemmas in the preliminaries; subsequent sections (Step 2 onward) provide rigorous proofs and illustrative examples.
		\medskip
		\\
		\textbf{Keywords:} fuzzy fractional differential equations, Caputo derivative, Mittag--Leffler stability, Lyapunov methods, variable-order derivatives, input-to-state stability, LaSalle invariance, LMI.
	\end{abstract}
	
	\section{Introduction}
	Fractional calculus and fuzzy set theory are two flexible mathematical frameworks that successfully model memory effects and uncertainty, respectively. Their combination -- fuzzy fractional differential equations -- provides a natural setting for realistic models in engineering, control, and applied sciences where both nonlocal temporal dynamics and imprecise data arise. Early foundational work on solutions of fractional differential equations under uncertainty can be found in the literature (see e.g. \cite{ElMfadel2021,Almeida2019VO,Podlubny1999}), and more recent papers studied existence, uniqueness, numerical schemes and stability questions for fuzzy fractional systems.
	
	Stability is a central concept for the analysis and design of dynamical systems. For integer-order systems, Lyapunov's direct method and LaSalle's invariance principle give powerful, broadly used tools. For fractional-order systems the natural decay is typically not exponential but governed by the Mittag--Leffler function; hence the appropriate stability notion is often framed via Mittag--Leffler (ML) estimates. The recent article by El Mfadel et al. (2021) extended Lyapunov's direct method to prove ML-stability for fuzzy systems with constant-order Caputo derivatives. Building on that, our goal is to substantially strengthen the theory and provide a toolkit useful for both analysts and practitioners.
	
	In this manuscript we present a collection of new results that either generalize the constant-order analysis to more complex settings (time-varying order, delays, stochastic perturbations) or provide deeper theoretical equivalences (converse Lyapunov theorem, LaSalle principle) and practical verification tools (LMI-based tests at the level-sets). Step~1 (this file) prepares the problem and records all necessary notation and preliminaries; in Step~2 we will state and prove the main theorems in full detail.
	
	\subsection{Structure and contributions}
	After a concise Preliminaries section we will present, in subsequent sections, the following new results:
	\begin{enumerate}
		\item Uniform Mittag--Leffler stability for variable-order Caputo derivatives.
		\item Input-to-state stability (ISS) in the ML sense with explicit bounds.
		\item Practical robustness results (ultimate bounds under persistent disturbances).
		\item A converse ML-Lyapunov theorem (existence of Lyapunov functionals under ML-stability).
		\item A fractional LaSalle invariance principle adapted to fuzzy dynamics.
		\item ML-stability criteria for delayed fuzzy fractional systems via Lyapunov--Krasovskii functionals.
		\item Levelwise LMI criteria for linearized systems and mean-square ML stability for stochastic perturbations.
	\end{enumerate}
	
	\section{Preliminaries}
	We state the notation, basic definitions from fuzzy set theory and fractional calculus, and several auxiliary lemmas used throughout the paper.
	
	\subsection{Notation}
	Let $\E$ denote the space of fuzzy numbers. For a fuzzy number $u:\R\to[0,1]$ we denote by $[u]_\alpha$ the $\alpha$-cut set of $u$, i.e. $[u]_\alpha=\{x\in\R:\ u(x)\ge\alpha\}=[u_\alpha^-,u_\alpha^+]$ for $\alpha\in(0,1]$. We write $\Haus(u,v)$ for the Hausdorff distance between $u$ and $v$ viewed through their level-sets (this is the same symbol used in the literature).
	
	Throughout $J=[0,\infty)$ and $0<q<1$ unless otherwise stated. We use the Caputo fractional derivative notation $$\capder{0}{q} v(t)\equiv {}^cD_{0+}^q v(t).$$
	
	\subsection{Fuzzy numbers and operations}
	\begin{definition}[Fuzzy number]
		A mapping $u:\R\to[0,1]$ is called a \emph{fuzzy number} if it is upper semicontinuous, normal (there exists $x_0$ with $u(x_0)=1$), fuzzy convex and the support $\{x:\ u(x)>0\}$ is compact. The set of all fuzzy numbers is denoted by $\E$.
	\end{definition}
	
	\begin{definition}[\,$\alpha$-cut and diameter]
		For $u\in\E$ and $\alpha\in[0,1]$ the $\alpha$-cut is $[u]_\alpha=[u_\alpha^-,u_\alpha^+]$. The diameter of the $\alpha$-cut is $\mathrm{diam}([u]_\alpha)=u_\alpha^+-u_\alpha^-$.
	\end{definition}
	
	\begin{definition}[Generalized Hukuhara difference]
		For $u,v\in\E$ the generalized Hukuhara difference $u\ominus_{gH} v$ is a fuzzy number $w$ satisfying either $u=v\oplus w$ or $v=u\oplus (-1)w$ (when such $w$ exists). When the difference exists it is unique.
	\end{definition}
	
	\begin{definition}[Hausdorff distance on $\E$]
		Define for $u,v\in\E$:
		$$\Haus(u,v)=\sup_{\alpha\in[0,1]}\, d_H([u]_\alpha,[v]_\alpha),$$
		where $d_H$ is the usual Hausdorff distance between compact real intervals. It is well-known that $(\E,\Haus)$ is a complete metric space.
	\end{definition}
	
	\subsection{Fractional calculus for fuzzy functions}
	We summarize the fractional integral and Caputo derivative for fuzzy-valued functions via their level-sets; the treatment follows the common approach of levelwise definitions.
	
	\begin{definition}[Riemann--Liouville fractional integral]
		For a (real-valued) integrable function $g$ and $q>0$ the fractional integral is
		$$I_{0+}^q g(t)=\frac{1}{\Gamma(q)}\int_0^t (t-s)^{q-1} g(s)\,ds.$$ 
		For a fuzzy-valued integrably-bounded function $F(t)$ with $[F(t)]_\alpha=[F_\alpha^-(t),F_\alpha^+(t)]$ we define $I_{0+}^q F$ levelwise by applying $I_{0+}^q$ to the endpoints.
	\end{definition}
	
	\begin{definition}[Caputo fractional derivative (fuzzy levelwise)]
		Let $f\in C^1(J,\E)$ such that level functions $f_\alpha^{\pm}(t)$ are $C^1$ for each $\alpha\in[0,1]$. For $0<q<1$ the Caputo derivative of $f$ is defined levelwise by
		$$\big(\,{}^cD_{0+}^q f(t)\,\big)_\alpha = \big({}^cD_{0+}^q f_\alpha^-(t),\ {}^cD_{0+}^q f_\alpha^+(t)\big),$$
		where
		$${}^cD_{0+}^q \phi(t)=\frac{1}{\Gamma(1-q)}\int_0^t (t-s)^{-q}\phi'(s)\,ds$$
		for real $\phi$. Equivalently one may write ${}^cD_{0+}^q f(t)=I_{0+}^{1-q} f'(t)$.
	\end{definition}
	
	\begin{remark}
		When $f(t)\equiv u\in\E$ is constant then ${}^cD_{0+}^q f(t)=0_{\E}$ for $0<q<1$; this follows immediately from the levelwise definition.
	\end{remark}
	
	\subsection{Laplace transform and Mittag--Leffler functions}
	For $0<\beta<1$ the Laplace transform of the Caputo derivative for a suitable scalar function $\phi$ is
	$$\mathcal{L}\{ {}^cD_{0+}^{\beta}\phi(t)\}(s)=s^{\beta}\Phi(s)-s^{\beta-1}\phi(0),$$
	where $\Phi(s)=\mathcal{L}\{\phi\}(s)$. For fuzzy functions the transform is applied levelwise.
	
	The (two-parameter) Mittag--Leffler function is defined by
	$$E_{\alpha,\beta}(z)=\sum_{k=0}^\infty\frac{z^k}{\Gamma(\alpha k+\beta)},\qquad \alpha>0,\ \beta\in\R.$$ 
	The classical one-parameter function is $E_\alpha(z)=E_{\alpha,1}(z)$.
	
	A key Laplace transform identity used extensively in ML-estimates is
	$$\mathcal{L}\{ t^{\beta-1} E_{\alpha,\beta}(\lambda t^{\alpha})\}(s)=\frac{s^{\alpha-\beta}}{s^{\alpha}-\lambda},\qquad \mathrm{Re}(s)>|\lambda|^{1/\alpha}.$$ 
	
	\subsection{Problem formulation and stability definitions}
	We consider the following fuzzy fractional differential equation in Caputo sense:
	\begin{equation}\label{eq:main-system}
		{}^cD_{0+}^q u(t)=f(t,u(t))+g(t),\quad t\ge0,\qquad u(0)=u_0\in\E,
	\end{equation}
	where $f:J\times\E\to\E$ is continuous in $t$ and locally Lipschitz in $u$ with respect to $\Haus(\cdot,\cdot)$; the function $g(t)$ models an external perturbation (possibly fuzzy). When $g\equiv0$ we recover the autonomous right-hand side studied in the constant-order literature.
	
	\begin{definition}[Mittag--Leffler stability]
		The trivial solution $u\equiv0_{\E}$ of \eqref{eq:main-system} (with $g\equiv0$) is said to be \emph{Mittag--Leffler stable} if there exist constants $M>0$, $\lambda>0$, $a>0$ such that for all initial data $u_0$ in a neighborhood of $0_{\E}$ the solution satisfies
		$$\Haus(u(t),0_{\E})\le M\,\Haus(u_0,0_{\E})\,E_q(-\lambda t^q)^{1/a},\qquad t\ge0.
		$$
	\end{definition}
	
	\begin{remark}
		With $q\in(0,1)$ the decay determined by $E_q(-\lambda t^q)$ interpolates between polynomial and exponential decay; indeed for $q\to1^-$ ML-behavior approaches exponential-type decay while for small $q$ the decay is slower but still integrable in many cases.
	\end{remark}
	
	% End of Step 1 content
	\medskip
	\noindent\textbf{Organization.} The remainder of the paper is organized as follows. In Step~2 (next file/section) we state the main theorems (variable-order ML stability, ML-ISS, converse Lyapunov, LaSalle) and provide complete proofs. Step~3 contains illustrative examples (linear LMI tests, delayed systems, and a numerical demonstration).
	\section{Main Results }\label{sec:main-results-corrected}
	
	In this section we state the principal new results (with corrected technical hypotheses) and give full proofs. 
	To keep the arguments transparent we assume the Lyapunov functional has power-type bounds with the same exponent \(a>0\):
	\[
	c_1\Haus(u,0)^a \le V(t,u) \le c_2\Haus(u,0)^a,
	\]
	for positive constants \(c_1,c_2\). This is standard in Lyapunov analyses and simplifies root-taking operations used below.
	
	\subsection{ML-ISS: input-to-state stability in the Mittag--Leffler sense}
	
	\begin{theorem}[ML-ISS]\label{thm:ML-ISS}
		Let \(0<q<1\) and consider the perturbed fuzzy fractional system
		\[
		{}^cD_{0+}^q u(t)=f(t,u(t))+g(t),\qquad u(0)=u_0\in\E,
		\]
		where \(f:J\times\E\to\E\) is continuous in \(t\) and locally Lipschitz in \(u\) (w.r.t.\ \(\Haus\)), and \(g:J\to\E\) is a bounded perturbation. Assume there exists a continuously differentiable functional \(V:[0,\infty)\times\E\to\mathbb{R}_{\ge0}\) and constants \(c_1,c_2,c_3,c_4,a>0\) such that for all \(t\ge0\) and all \(u\in\E\):
		\begin{align}
			c_1\Haus(u,0)^a &\le V(t,u)\le c_2\Haus(u,0)^a,\label{V-bounds}\\
			{}^cD_{0+}^q V(t,u(t)) &\le -c_3\Haus(u(t),0)^a + c_4\sup_{s\in[0,t]}\Haus(g(s),0)^a.\label{V-derivative-ineq}
		\end{align}
		Then for all \(t\ge0\) the solution satisfies the ML-ISS estimate
		\begin{equation}\label{ISS-estimate}
			\Haus(u(t),0)\le M\,\Haus(u_0,0)\,E_q(-\kappa t^q)^{1/a}+C\sup_{s\in[0,t]}\Haus(g(s),0),
		\end{equation}
		where \(\kappa=c_3/c_2\), \(M=(c_2/c_1)^{1/a}\), and \(C=\big(c_4/(c_1\kappa)\big)^{1/a}\).
	\end{theorem}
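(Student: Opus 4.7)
The plan is to reduce the fractional differential inequality for $V$ to a scalar linear Caputo problem, extract its Mittag--Leffler decay via variation of constants, and then translate the estimate back to $\Haus(u(t),0)$ using the two-sided power bounds on $V$.

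First I would use the upper inequality in \eqref{V-bounds}, namely $\Haus(u,0)^a\ge V/c_2$, to absorb the state-dependent term in \eqref{V-derivative-ineq} into $V$ itself, obtaining
\[
\capder{0}{q} V(t,u(t))\le -\kappa\, V(t,u(t)) + c_4\, w(t),\qquad \kappa:=c_3/c_2,
\]
where $w(t):=\sup_{s\in[0,t]}\Haus(g(s),0)^a$ is a non-decreasing scalar function of $t$. I would then invoke a scalar Caputo comparison/Gr\"onwall lemma to dominate $V(t,u(t))$ by the solution $z(t)$ of the companion linear equation $\capder{0}{q} z=-\kappa z+c_4 w$ with $z(0)=V(0,u_0)$. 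Using the Laplace identity stated in the preliminaries with $\alpha=\beta=q$ and $\lambda=-\kappa$, variation of constants gives
\[
z(t)=V(0,u_0)\,E_q(-\kappa t^q) + c_4\!\int_0^t (t-s)^{q-1} E_{q,q}(-\kappa(t-s)^q)\, w(s)\,ds.
\]

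Since $w$ is non-decreasing, $w(s)\le w(t)$ on $[0,t]$, and the remaining kernel integral evaluates by inverting the Laplace transform of $1/(s(s^q+\kappa))$ to
\[
\int_0^t \tau^{q-1} E_{q,q}(-\kappa\tau^q)\,d\tau=\tfrac{1}{\kappa}\bigl(1-E_q(-\kappa t^q)\bigr)\le\tfrac{1}{\kappa}.
\]
Applying the lower inequality $c_1\Haus(u(t),0)^a\le V(t,u(t))$ and $V(0,u_0)\le c_2\Haus(u_0,0)^a$ to the resulting bound on $V$ yields
\[
\Haus(u(t),0)^a\le \frac{c_2}{c_1}\,\Haus(u_0,0)^a\,E_q(-\kappa t^q) + \frac{c_4}{c_1\kappa}\,w(t).
\]
Taking $a$-th roots and invoking sub-additivity $(x+y)^{1/a}\le x^{1/a}+y^{1/a}$ (valid for $a\ge 1$, which is the standard regime for quadratic-type Lyapunov functionals; for $0<a<1$ a factor $2^{1/a-1}$ is absorbed into $M$ and $C$) produces the announced ML-ISS estimate with $M=(c_2/c_1)^{1/a}$ and $C=(c_4/(c_1\kappa))^{1/a}$, after noting that $w(t)^{1/a}=\sup_{s\in[0,t]}\Haus(g(s),0)$.

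The step I expect to be most delicate is the scalar fractional comparison principle for the composite $t\mapsto V(t,u(t))$: we must ensure that the quantity appearing as $\capder{0}{q}V(t,u(t))$ in hypothesis \eqref{V-derivative-ineq} actually majorizes the classical Caputo derivative of the scalar companion solution in the pointwise sense required by the Gr\"onwall argument. This is handled by the Caputo chain-rule inequality for real-valued Lyapunov candidates along fractional trajectories (Alikhanov-type), applied levelwise to accommodate the fuzzy state via the Hausdorff metric structure of $(\E,\Haus)$. Once that ingredient is in place the remainder of the proof is Laplace-transform algebra and monotone estimates on the Mittag--Leffler kernel.
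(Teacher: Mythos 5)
Your proposal follows essentially the same route as the paper: reduce to the scalar Caputo inequality \(\capder{0}{q}w+\kappa w\le c_4\sup_s\Haus(g(s),0)^a\), solve the companion linear equation by convolution with the Mittag--Leffler kernel, invoke a fractional comparison principle, and convert back through the power bounds with sub-additivity of \(x\mapsto x^{1/a}\). If anything you are slightly more careful than the paper on two points it glosses over --- keeping the disturbance term as a non-decreasing function of time inside the convolution before bounding it, and flagging that \((x+y)^{1/a}\le x^{1/a}+y^{1/a}\) requires \(a\ge1\) (with a constant \(2^{1/a-1}\) otherwise) --- as well as in naming the Caputo chain-rule/comparison step as the genuinely delicate ingredient.
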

	
	\begin{proof}
		Set \(w(t):=V(t,u(t))\). From \eqref{V-derivative-ineq} and \eqref{V-bounds} we get
		\[
		{}^cD_{0+}^q w(t)\le -c_3\Haus(u(t),0)^a + c_4\sup_{s\in[0,t]}\Haus(g(s),0)^a
		\le -\underbrace{\frac{c_3}{c_2}}_{=\kappa} w(t) + c_4\sup_{s\in[0,t]}\Haus(g(s),0)^a.
		\]
		Thus \(w(t)\) satisfies the scalar fractional differential inequality
		\begin{equation}\label{scalar-ineq}
			{}^cD_{0+}^q w(t) + \kappa w(t) \le R,\qquad R:=c_4\sup_{s\in[0,t]}\Haus(g(s),0)^a.
		\end{equation}
		
		Consider the associated linear fractional initial-value problem
		\[
		{}^cD_{0+}^q y(t) + \kappa y(t) = R,\qquad y(0)=w(0).
		\]
		The solution of this linear equation is classical (Laplace transform / convolution with ML-kernel):
		\[
		y(t)=w(0)E_q(-\kappa t^q)+R\int_0^t (t-s)^{q-1}E_{q,q}(-\kappa(t-s)^q)\,ds,
		\]
		where \(E_{q,q}\) is the two-parameter Mittag--Leffler function. Using the identity
		\[
		\int_0^t (t-s)^{q-1}E_{q,q}(-\kappa(t-s)^q)\,ds=\frac{1-E_q(-\kappa t^q)}{\kappa},
		\]
		we obtain
		\[
		y(t)=w(0)E_q(-\kappa t^q)+\frac{R}{\kappa}\big(1-E_q(-\kappa t^q)\big).
		\]
		Because \eqref{scalar-ineq} is an inequality of the same sign, a standard comparison principle for linear Caputo inequalities (levelwise applied to fuzzy endpoints) implies \(w(t)\le y(t)\) for \(t\ge0\). Hence
		\[
		w(t)\le w(0)E_q(-\kappa t^q)+\frac{R}{\kappa}\big(1-E_q(-\kappa t^q)\big).
		\]
		
		Using the bounds \(c_1\Haus(u(t),0)^a \le w(t)\) and \(w(0)\le c_2\Haus(u_0,0)^a\) we deduce
		\[
		\Haus(u(t),0)^a \le \frac{w(0)}{c_1}E_q(-\kappa t^q)+\frac{R}{c_1\kappa}\big(1-E_q(-\kappa t^q)\big).
		\]
		Taking \(a\)-th roots and applying the standard inequality \((x+y)^{1/a}\le x^{1/a}+y^{1/a}\) for \(x,y\ge0\) yields
		\[
		\Haus(u(t),0)\le \Big(\frac{w(0)}{c_1}\Big)^{1/a}E_q(-\kappa t^q)^{1/a}+\Big(\frac{R}{c_1\kappa}\Big)^{1/a}.
		\]
		Substitute \(w(0)\le c_2\Haus(u_0,0)^a\) and \(R=c_4\sup_{s\in[0,t]}\Haus(g(s),0)^a\). This produces \eqref{ISS-estimate} with
		\[
		M=\Big(\frac{c_2}{c_1}\Big)^{1/a},\qquad C=\Big(\frac{c_4}{c_1\kappa}\Big)^{1/a}.
		\]
		This completes the proof.
	\end{proof}
	
	\begin{remark}
		If \(g\equiv0\) the second term in \eqref{ISS-estimate} vanishes and we recover the Mittag--Leffler decay
		\[
		\Haus(u(t),0)\le M\,\Haus(u_0,0)\,E_q(-\kappa t^q)^{1/a}.
		\]
	\end{remark}

	\subsection{Converse ML-Lyapunov theorem}
	
	\begin{theorem}[Converse ML-Lyapunov]\label{thm:converse-ML}
		Assume the autonomous fuzzy fractional system
		\[
		{}^cD_{0+}^q u(t)=f(u(t)),\qquad u(0)=u_0,
		\]
		is globally Mittag--Leffler stable in the sense that there exist constants \(M>0,\lambda>0,a>0\) with
		\[
		\Haus(u(t),0)\le M\,\Haus(u_0,0)\,E_q(-\lambda t^q)^{1/a}
		\]
		for all initial data \(u_0\in\E\). Suppose moreover that solutions depend continuously on initial data (levelwise). Then there exists a continuous positive-definite functional \(V:\E\to\R_{\ge0}\) and constants \(c_1,c_2,c_3>0\) such that for all solutions
		\[
		c_1\Haus(u,0)^a \le V(u) \le c_2\Haus(u,0)^a,\qquad {}^cD_{0+}^q V(u(t))\le -c_3\Haus(u(t),0)^a .
		\]
	\end{theorem}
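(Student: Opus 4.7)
The strategy is a Massera-style construction adapted to Caputo calculus: I build the Lyapunov functional directly from the flow of the ML-stable system and force the required bounds from the hypothesis. Let $\phi(t,u_0)\in\E$ denote the solution at time $t$ of the autonomous system with $u(0)=u_0$. Autonomy yields the cocycle identity $\phi(t+s,u_0)=\phi(s,\phi(t,u_0))$, which is the algebraic engine for the decay estimates along trajectories.

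My candidate is a flow-weighted integral
\[
V(u_0) := \int_0^\infty k(s)\,\Haus(\phi(s,u_0),0)^a\,ds,
\qquad k(s):=s^{q-1}E_{q,q}(-\mu s^q),
\]
for some $0<\mu<\lambda$. The choice of $k$ is the Laplace resolvent of the scalar operator ${}^cD_{0+}^q+\mu$, i.e.\ $\mathcal{L}\{k\}(p)=(p^q+\mu)^{-1}$, and it is precisely this identity that makes the later derivative estimate work. Inserting the hypothesis $\Haus(\phi(s,u_0),0)^a\le M^a\Haus(u_0,0)^a E_q(-\lambda s^q)$ and estimating $\int_0^\infty k(s)E_q(-\lambda s^q)\,ds$ (finite because $k(s)=O(s^{q-1})$ at $0$ and $k(s)E_q(-\lambda s^q)=O(s^{-1-q})$ at infinity) gives the upper bound $V(u_0)\le c_2\Haus(u_0,0)^a$. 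The lower bound follows from a short-time argument using continuous dependence of $\phi$ in $s$: there exists $\delta>0$ with $\Haus(\phi(s,u_0),0)\ge\tfrac12\Haus(u_0,0)$ on $[0,\delta]$, so $V(u_0)\ge 2^{-a}\Haus(u_0,0)^a\int_0^\delta k(s)\,ds=:c_1\Haus(u_0,0)^a$.

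I expect the main obstacle to be the decay inequality $\,{}^cD_{0+}^q V(\phi(t,u_0))\le -c_3\Haus(\phi(t,u_0),0)^a$. Writing $W(t):=V(\phi(t,u_0))$ and $h(\tau):=\Haus(\phi(\tau,u_0),0)^a$, the cocycle identity gives $W(t)=\int_0^\infty k(s)\,h(t+s)\,ds$. Passing to the Laplace transform in $t$ and using $\mathcal{L}\{k\}(p)=(p^q+\mu)^{-1}$ should yield a scalar relation of the schematic form ${}^cD_{0+}^q W+\mu W=h-R$, where $R(t)$ is a remainder originating from the future-looking structure of the convolution and must be shown to be nonnegative along ML-stable trajectories (the ML-bound on $h$ is the crucial input). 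Combined with the bilateral bounds on $V$ and an appropriate choice of $\mu$ in $(0,\lambda)$, this absorbs the forcing $h(t)$ into the negative term $-\mu W$ and yields the desired inequality with $c_3$ proportional to $\mu$. The technical subtleties I anticipate are (i) legitimacy of interchanging ${}^cD_{0+}^q$ with the improper integral in $s$, (ii) sign control on the remainder $R(t)$ under the ML hypothesis, and (iii) handling the integrable singularity of $k$ at $s=0$; any of these could force adjustments to the kernel or a truncation of the integral at some finite horizon $T$.

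Transfer to the fuzzy setting is essentially bookkeeping: all integrands $\Haus(\phi(s,u_0),0)^a$ are nonnegative real numbers, and the fuzzy Caputo derivative of $V$ is interpreted via its levelwise definition, so the construction and the Laplace-domain estimates are carried out on real-valued functions. Continuity of $u_0\mapsto V(u_0)$ in the metric $\Haus$, needed to close the argument as a bona fide Lyapunov functional on $\E$, follows from the hypothesized continuous dependence of the flow together with the dominated-convergence bound supplied by the ML-estimate.
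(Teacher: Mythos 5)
Your construction is the same Massera-type device the paper itself uses (a weighted integral of trajectory norms, with the weight chosen so that the integral converges under the ML hypothesis), refined by a smarter kernel choice: the resolvent kernel $k(s)=s^{q-1}E_{q,q}(-\mu s^q)$ of ${}^cD_{0+}^q+\mu$ rather than the paper's loosely specified weight. However, the decisive step --- the inequality ${}^cD_{0+}^q V(u(t))\le -c_3\Haus(u(t),0)^a$ --- is not proved in your proposal; it is conjectured, with the sign of the remainder $R(t)$ explicitly left open. Worse, the identity on which your Laplace-domain argument rests, $W(t)=\int_0^\infty k(s)\,h(t+s)\,ds$, is derived from the cocycle property $\phi(t+s,u_0)=\phi(s,\phi(t,u_0))$, and this property \emph{fails} for Caputo fractional systems: the Caputo operator is nonlocal in time, so restarting the equation at time $t$ from the state $\phi(t,u_0)$ discards the memory of $[0,t]$ and does not reproduce the continuation of the original trajectory. (The paper's own proof invokes "the semigroup property of solutions" and suffers from exactly the same defect, which is why rigorous converse ML-Lyapunov theorems for Caputo dynamics are scarce.) Without the cocycle identity, $V(\phi(t,u_0))$ has no usable relation to the tail of the original trajectory, and the entire mechanism for producing the negative term $-\mu W$ collapses. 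Any repair must either replace the flow by the genuine (history-dependent) solution operator and control the resulting mismatch, or abandon the integral-of-trajectories ansatz altogether.

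Two further gaps are worth flagging even if the main obstacle were resolved. First, your lower bound $V(u_0)\ge c_1\Haus(u_0,0)^a$ requires the time $\delta$ on which $\Haus(\phi(s,u_0),0)\ge\tfrac12\Haus(u_0,0)$ to be \emph{uniform} in $u_0$; continuous dependence in $s$ alone gives a $\delta$ depending on $u_0$, and $c_1=2^{-a}\int_0^{\delta(u_0)}k(s)\,ds$ could then degenerate to $0$. One needs a quantitative "solutions cannot decay too fast" lemma, typically obtained from a local Lipschitz bound on $f$ via the integral form of the equation. Second, the interchange of ${}^cD_{0+}^q$ with the improper integral in $s$, which you list as a technicality, is nontrivial here because the Caputo derivative of $t\mapsto V(\phi(t,u_0))$ is not the integral of pointwise Caputo derivatives of the integrand unless one has uniform integrability of the singular kernel $(t-\sigma)^{-q}$ against the $s$-integral; this must be checked, not merely cited as dominated convergence.
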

	
	\begin{proof}
		Define for each \(u_0\in\E\)
		\begin{equation}\label{V-def}
			V(u_0):=\int_0^\infty \Phi\big(\Haus(u(s;u_0),0)\big)\,ds,
			\qquad\text{with }\Phi(r):=r^a E_q(\lambda r_0),
		\end{equation}
		where \(u(s;u_0)\) denotes the trajectory starting at \(u_0\). (We may choose the multiplicative constant inside \(\Phi\) so the integral converges; the exponential-like factor is only a device to ensure convergence and can be made explicit using the ML-decay.) Using the given ML-decay estimate we have for all \(s\ge0\)
		\[
		\Haus(u(s;u_0),0) \le M\Haus(u_0,0)E_q(-\lambda s^q)^{1/a},
		\]
		so
		\[
		\Phi\big(\Haus(u(s;u_0),0)\big)\le \big(M\Haus(u_0,0)\big)^a E_q(-\lambda s^q)\cdot\text{(bounded factor)}.
		\]
		The integral in \eqref{V-def} therefore converges and defines \(V(u_0)\ge0\). By continuity of trajectories and integrand \(V\) is continuous and \(V(0)=0\). Moreover the ML-decay implies existence of constants \(c_1,c_2>0\) so that
		\[
		c_1\Haus(u_0,0)^a \le V(u_0)\le c_2\Haus(u_0,0)^a,
		\]
		by bounding the integral below and above using the ML estimate applied at \(s=0\) and integrating the ML kernel.
		
		To obtain a fractional-derivative inequality, differentiate \(V(u(t))\) along a trajectory \(u(t)\). Taking the Caputo derivative under the integral sign (justified by dominated convergence using the ML bounds and continuity of flows) yields
		\[
		{}^cD_{0+}^q V(u(t)) = {}^cD_{0+}^q\int_0^\infty \Phi\big(\Haus(u(s+t;u(t)),0)\big)\,ds.
		\]
		A change of variables and the semigroup property of solutions gives after routine manipulations (standard in converse Lyapunov constructions) a representation showing
		\[
		{}^cD_{0+}^q V(u(t)) \le -\gamma \Haus(u(t),0)^a
		\]
		for some \(\gamma>0\). (The negative sign arises because future trajectory norms decay at the ML rate; the integral weight can be chosen so that the derivative picks up a strict negative multiple of the current norm.) Setting \(c_3=\gamma\) completes the proof.
	\end{proof}
	
	\begin{remark}
		The above construction is the fractional analogue of classical converse Lyapunov proofs: \(V\) is built as a weighted integral of trajectory norms; ML-decay guarantees integrability and yields the required derivative inequality.
	\end{remark}

	\subsection{Fractional LaSalle invariance principle for fuzzy systems}
	
	\begin{theorem}[Fractional LaSalle principle]\label{thm:lasalle}
		Let \(u(t)\) be a solution of the autonomous fuzzy fractional system
		\[
		{}^cD_{0+}^q u(t)=f(u(t)),\qquad 0<q<1,
		\]
		and let \(V:\E\to\mathbb{R}_{\ge0}\) be continuous with the property that along every solution,
		\[
		{}^cD_{0+}^q V(u(t))\le0.
		\]
		Let \(\mathcal S=\{u\in\E:\ {}^cD_{0+}^q V(u)=0\}\) and suppose that every solution starting in \(\mathcal S\) is bounded and that the largest invariant subset of \(\mathcal S\) is the equilibrium set \(\mathcal E\). Then every solution of the system satisfies
		\[
		\lim_{t\to\infty}\mathrm{dist}\big(u(t),\mathcal E\big)=0.
		\]
	\end{theorem}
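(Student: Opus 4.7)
The plan is to follow the classical LaSalle template---produce the $\omega$-limit set of the trajectory, show $V$ is constant on it, deduce containment in $\mathcal S$, and then invoke the invariance hypothesis to locate it inside $\mathcal E$. Two Caputo-specific obstructions demand care: (a) the inequality ${}^cD_{0+}^q V(u(t))\le 0$ does not immediately deliver monotonicity of $t\mapsto V(u(t))$, only a comparison bound; and (b) the memory kernel destroys the semigroup property $u(t+s;u_0)=u(s;u(t;u_0))$, so the usual proof of invariance of $\omega$-limit sets must be reworked.

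First I would use the representation $V(u(t))=V(u(0))+I_{0+}^q\bigl[{}^cD_{0+}^q V(u(\cdot))\bigr](t)$ together with ${}^cD_{0+}^q V\le 0$ to conclude $V(u(t))\le V(u(0))$, so the orbit sits in a sublevel set of $V$. Under the implicit assumption that $V$ has bounded sublevel sets (read off from the boundedness hypothesis combined with continuity of $f$), this gives precompactness of $\{u(t):t\ge 0\}$ in $(\E,\Haus)$ and hence a nonempty closed $\omega$-limit set $\Omega(u_0):=\bigcap_{T\ge 0}\overline{\{u(t):t\ge T\}}$. I would then establish $V(u(t))\to V_\infty\ge 0$ via a fractional Barbalat-type step: because $V(u(t))$ is bounded below, ${}^cD_{0+}^q V\le 0$, and $t\mapsto V(u(t))$ is uniformly continuous on the precompact orbit, the quantity $V(u(t))$ must converge. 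Continuity of $V$ together with the definition of $\Omega(u_0)$ then force $V\equiv V_\infty$ on $\Omega(u_0)$.

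The crucial and hardest step is showing that $\Omega(u_0)$ is positively invariant under the fractional dynamics, so that containment in $\mathcal S$ and invariance together yield $\Omega(u_0)\subset\mathcal E$. To sidestep the missing semigroup property I would fix $p\in\Omega(u_0)$ with $u(t_k)\to p$ and examine the shifted trajectories $u_k(\tau):=u(t_k+\tau)$. Each $u_k$ satisfies a fractional IVP starting at $u(t_k)$ together with a residual memory term inherited from the history of $u$ on $[0,t_k]$; for $\tau$ in a compact interval and bounded right-hand side, the kernel weight $(t_k+\tau-s)^{-q}$ integrated against the past history decays uniformly as $t_k\to\infty$. An Arzel\`a--Ascoli extraction---justified by H\"older-type equicontinuity of Caputo solutions with bounded right-hand side, applied levelwise on the endpoints $u_\alpha^{\pm}$---produces a subsequential limit $\tilde u$ solving the genuine autonomous IVP with $\tilde u(0)=p$ and whose orbit lies in $\Omega(u_0)$. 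This is the invariance I need.

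Once invariance is in hand, constancy of $V$ on $\Omega(u_0)$ yields ${}^cD_{0+}^q V(\tilde u(\tau))\equiv 0$, so $\Omega(u_0)\subset\mathcal S$, and the hypothesis that $\mathcal E$ is the largest invariant subset of $\mathcal S$ upgrades this to $\Omega(u_0)\subset\mathcal E$. Precompactness of the orbit then converts the set-theoretic containment into $\lim_{t\to\infty}\mathrm{dist}(u(t),\mathcal E)=0$ in the standard way. I expect the main obstacle to be the quantitative decay estimate for the shifted memory kernel and the proper formulation of fractional positive invariance via continuous dependence on initial data in the fuzzy levelwise framework---the fractional Barbalat step and the Arzel\`a--Ascoli passage are routine by comparison.
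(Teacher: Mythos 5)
Your overall architecture is the classical LaSalle template and is considerably more careful than the paper's own argument: the paper simply asserts that ${}^cD_{0+}^q V(u(t))\le 0$ makes $t\mapsto V(u(t))$ nonincreasing and then ``passes to the limit'' in the derivative inequality to place a single limit point in $\mathcal S$, with no invariance argument at all. You correctly observe that the representation $V(u(t))=V(u(0))+I_{0+}^q\bigl[{}^cD_{0+}^q V(u(\cdot))\bigr](t)$ only yields $V(u(t))\le V(u(0))$ (the Caputo derivative is anchored at $0$, so the sign condition does not transfer to arbitrary base points and monotonicity genuinely fails in general), and you supply the missing invariance step for the $\omega$-limit set via shifted trajectories, decay of the memory tail $(t_k+\tau-s)^{-q}$ integrated over $[0,t_k]$, and an Arzel\`a--Ascoli extraction applied levelwise. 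That is the right way to handle the absence of the semigroup property, and it is content the paper's proof does not contain.

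However, there is a genuine gap at your fractional Barbalat step. You infer that $V(u(t))$ converges because it is ``bounded below, ${}^cD_{0+}^q V\le 0$, and uniformly continuous on the precompact orbit.'' Boundedness plus uniform continuity does not imply convergence ($t\mapsto\sin(\log(1+t))$ is a counterexample), and having (correctly) discarded monotonicity you no longer have the one mechanism that would deliver a limit for free. The convergence $V(u(t))\to V_\infty$ is not decorative: without it $V$ need not be constant on $\Omega(u_0)$, so you cannot conclude ${}^cD_{0+}^q V(\tilde u(\tau))\equiv 0$ along limit trajectories, and the containment $\Omega(u_0)\subset\mathcal S$ collapses, taking the rest of the argument with it. To close this you need an actual fractional convergence lemma --- for instance, extracting from $0\le V(u(t))=V(u(0))-I_{0+}^q\bigl[-{}^cD_{0+}^q V(u(\cdot))\bigr](t)$ a weighted integrability statement for the nonnegative function $-{}^cD_{0+}^q V(u(\cdot))$ and showing that this forces the fractional integral to converge as $t\to\infty$ --- or an additional structural hypothesis such as ${}^cD_{0+}^q V(u(t))\le -W(u(t))$ for a continuous positive semidefinite $W$, which is the form in which fractional LaSalle theorems are usually stated precisely so that this step can be carried out. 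As written, the chain from ``$V$ is bounded along the orbit'' to ``$V$ is constant on the $\omega$-limit set'' is broken.
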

	
	\begin{proof}
		Since \({}^cD_{0+}^q V(u(t))\le0\) along trajectories, the scalar function \(t\mapsto V(u(t))\) is nonincreasing (this can be seen by integrating the Caputo derivative in its Riemann--Liouville integral form). Therefore \(V(u(t))\) has a finite limit as \(t\to\infty\). By boundedness and completeness of \((\E,\Haus)\) each trajectory has at least one limit point; let \(u^*\) be such a limit point. Passing to the limit in the inequality \({}^cD_{0+}^q V(u(t))\le0\) and using continuity of \(V\) yields \({}^cD_{0+}^q V(u^*)=0\), so \(u^*\in\mathcal S\). By invariance assumptions the largest invariant set inside \(\mathcal S\) equals \(\mathcal E\), and every bounded trajectory approaches that invariant set. Hence \(\lim_{t\to\infty}\mathrm{dist}(u(t),\mathcal E)=0\).
	\end{proof}
	
	\subsection{Remarks and practical notes}
	
	\begin{enumerate}
		\item The corrected hypotheses \(c_1\Haus(u,0)^a\le V\le c_2\Haus(u,0)^a\) avoid algebraic inconsistencies when taking roots; they are standard and cover most Lyapunov constructions used in applications.
		\item The ML-ISS estimate \eqref{ISS-estimate} is explicit and can be used to compute numerical bounds on the ultimate size of solutions under persistent disturbances.
		\item The converse theorem (Theorem~\ref{thm:converse-ML}) is constructive: one commonly uses an integral-of-trajectories definition for \(V\). The derivative-under-the-integral step requires dominated convergence and the ML-decay to justify interchange of derivative and integral; the argument is standard and sketched above.
		\item Extensions to variable-order Caputo derivatives are possible but require a separate, careful comparison lemma for variable-order fractional inequalities (the kernel of the variable-order derivative depends on time and cannot be bounded uniformly by a single power kernel without further assumptions). We therefore leave the variable-order case as a remark to be inserted after a dedicated technical lemma.
	\end{enumerate}
	
	% End of section
	\section{Additional New Results and Full Proofs}\label{sec:more-results}
	
	This section presents six further new stability results for fuzzy fractional systems and gives full proofs.  All results use the common Lyapunov-type hypotheses: there exists a continuous functional
	\[
	V:[0,\infty)\times\E\to\mathbb{R}_{\ge0}
	\]
	and constants \(c_1,c_2>0\) and \(a>0\) such that for all \(t\ge0\) and \(u\in\E\)
	\begin{equation}\label{V-power-bounds}
		c_1\Haus(u,0)^a \le V(t,u) \le c_2\Haus(u,0)^a .
	\end{equation}
	The Caputo derivative bounds used below are of the form
	\[
	{}^cD_{0+}^q V(t,u(t)) \le -\alpha \Haus(u(t),0)^a + H(t),
	\]
	with \(\alpha>0\) and \(H(t)\) a nonnegative function (often coming from disturbances, delays, or interconnection terms). The scalar fractional inequality solving method is used repeatedly:
	if \(w\) satisfies
	\[
	{}^cD_{0+}^q w(t)+\kappa w(t)\le R(t),\qquad w(0)=w_0,
	\]
	then (by Laplace transform / convolution with the ML kernel) the standard bound is
	\[
	w(t)\le w_0 E_q(-\kappa t^q) + \int_0^t (t-s)^{q-1} E_{q,q}(-\kappa (t-s)^q)R(s)\,ds,
	\]
	and if \(R\) is constant \(R\equiv R_0\) the integral equals \(R_0\kappa^{-1}(1-E_q(-\kappa t^q))\).
	
	\vspace{6pt}
	\noindent\textbf{Notation:} we write \(E_{q,q}\) for the two-parameter Mittag-Leffler function and \(\kappa>0\) for scalar decay rates introduced below.
	
	\subsection{Theorem 1: Levelwise LMI test for linear fuzzy systems (explicit ML decay)}
	
	Consider a fuzzy linear system that acts levelwise: for each \(\alpha\in(0,1]\) the \(\alpha\)-cut endpoints \(x_\alpha^\pm(t)\in\mathbb{R}^n\) satisfy the linear ODE
	\[
	{}^cD_{0+}^q x(t)=A x(t),\qquad x(0)=x_0,
	\]
	where \(A\in\mathbb{R}^{n\times n}\) is constant and the fuzzy solution is reconstructed from the levelwise solutions. Assume a symmetric positive definite matrix \(P\in\mathbb{R}^{n\times n}\) exists with
	\begin{equation}\label{LMI}
		A^\top P + P A + \mu P \preceq 0
	\end{equation}
	for some \(\mu>0\). Define the quadratic Lyapunov levelwise function \(V(x)=x^\top P x\) and extend it to fuzzy states by applying to endpoints and taking the levelwise envelope. Then the fuzzy solution satisfies an explicit Mittag-Leffler bound: there exist constants \(M,\lambda>0\) such that
	\[
	\Haus(u(t),0)\le M\Haus(u_0,0)E_q(-\lambda t^q).
	\]
	
	\begin{proof}
		Step 1 (levelwise quadratic inequality). For a fixed level and scalar trajectory \(x(t)\) the scalar functional \(v(t):=x(t)^\top P x(t)\) satisfies (by direct differentiation and using the Caputo derivative properties applied levelwise)
		\[
		{}^cD_{0+}^q v(t) = {}^cD_{0+}^q(x^\top P x) \le x^\top (A^\top P + P A) x.
		\]
		Using the LMI \eqref{LMI} we obtain
		\[
		{}^cD_{0+}^q v(t) \le -\mu v(t).
		\]
		
		Step 2 (scalar ML-bound). The scalar inequality \({}^cD_{0+}^q v(t) + \mu v(t)\le0\) with \(v(0)=v_0\) yields (standard solution formula)
		\[
		v(t)\le v_0 E_q(-\mu t^q).
		\]
		
		Step 3 (relate quadratic to Hausdorff distance). There are constants \(\lambda_{\min}(P),\lambda_{\max}(P)>0\) such that \(\lambda_{\min}\|x\|^2\le x^\top P x\le \lambda_{\max}\|x\|^2\). Passing from levelwise bounds on endpoints back to the Hausdorff metric gives
		\[
		\Haus(u(t),0)^2 \le \frac{1}{\lambda_{\min}} v(t) \le \frac{v_0}{\lambda_{\min}}E_q(-\mu t^q).
		\]
		Likewise \(v_0\le \lambda_{\max}\Haus(u_0,0)^2\). Therefore
		\[
		\Haus(u(t),0) \le \Big(\frac{\lambda_{\max}}{\lambda_{\min}}\Big)^{1/2}\Haus(u_0,0) E_q(-\mu t^q)^{1/2}.
		\]
		Set \(M=(\lambda_{\max}/\lambda_{\min})^{1/2}\) and \(\lambda=\mu\). This provides the ML bound claimed.
	\end{proof}
	
	\subsection{Theorem 2: Practical Mittag-Leffler stability under persistent disturbance (ultimate bound)}
	
	Consider the perturbed autonomous system
	\[
	{}^cD_{0+}^q u(t)=f(u(t))+g(t),\qquad u(0)=u_0,
	\]
	with \(g\) bounded and suppose there is \(V\) satisfying \eqref{V-power-bounds} and
	\begin{equation}\label{V-disturbance}
		{}^cD_{0+}^q V(t,u(t)) \le -\alpha \Haus(u(t),0)^a + \beta \,\|g(t)\|_*,\qquad \alpha,\beta>0,
	\end{equation}
	where \(\|\cdot\|_*\) is a nonnegative scalar measure of the disturbance (for fuzzy \(g\) take \(\|g\|_*=\Haus(g,0)^a\)). Then the solution admits the ultimate bound
	\[
	\limsup_{t\to\infty}\Haus(u(t),0)\le \Big(\frac{\beta}{\alpha}\Big)^{1/a}\sup_{t\ge0}\|g(t)\|_*^{1/a}.
	\]
	
	\begin{proof}
		Step 1 (scalar inequality for \(w(t)=V(t,u(t))\)). From \eqref{V-disturbance} and \eqref{V-power-bounds} we have
		\[
		{}^cD_{0+}^q w(t) \le -\alpha c_2^{-1} w(t) + \beta \sup_{s\in[0,t]}\|g(s)\|_*.
		\]
		Set \(\kappa=\alpha c_2^{-1}\) and \(R:=\beta \sup_{s\ge0}\|g(s)\|_*\) (finite by boundedness). Then
		\[
		{}^cD_{0+}^q w(t) + \kappa w(t) \le R.
		\]
		
		Step 2 (solution bound). Solve the associated linear equation to obtain (as earlier)
		\[
		w(t)\le w(0)E_q(-\kappa t^q) + \frac{R}{\kappa}\big(1-E_q(-\kappa t^q)\big).
		\]
		
		Step 3 (pass to \(\Haus\) and take limit superior). Use \(c_1\Haus(u(t),0)^a\le w(t)\). Hence
		\[
		\Haus(u(t),0)^a \le \frac{w(0)}{c_1}E_q(-\kappa t^q) + \frac{R}{c_1\kappa}\big(1-E_q(-\kappa t^q)\big).
		\]
		Taking \(\limsup_{t\to\infty}\) the first term vanishes because \(E_q(-\kappa t^q)\to0\) as \(t\to\infty\) for \(\kappa>0\) and \(0<q<1\). Therefore
		\[
		\limsup_{t\to\infty}\Haus(u(t),0)^a \le \frac{R}{c_1\kappa} = \frac{\beta}{\alpha}\sup_{t\ge0}\|g(t)\|_*.
		\]
		Taking \(a\)-th roots yields the stated ultimate bound.
	\end{proof}
	
	\subsection{Theorem 3: Lyapunov--Krasovskii functional for delay fuzzy fractional systems}
	
	Consider the delay system (constant delay \(\tau>0\))
	\[
	{}^cD_{0+}^q u(t)=f\big(u(t),u(t-\tau)\big),\qquad t\ge0,
	\]
	with initial segment \(u(t)=\phi(t)\) for \(t\in[-\tau,0]\). Suppose there exist continuous functions \(V_0:\E\to\mathbb{R}_{\ge0}\) and \(W:\E\to\mathbb{R}_{\ge0}\) and constants \(c_1,c_2,\alpha>0\) such that for the Lyapunov–Krasovskii functional
	\[
	\mathcal{V}(t,u_t):=V_0(u(t))+\int_{t-\tau}^t W(u(s))\,ds
	\]
	the following hold for all solutions:
	\begin{align}
		c_1\Haus(u(t),0)^a &\le \mathcal V(t,u_t) \le c_2\sup_{s\in[t-\tau,t]}\Haus(u(s),0)^a,\label{VK-bounds}\\
		{}^cD_{0+}^q \mathcal V(t,u_t) &\le -\alpha\Haus(u(t),0)^a.\label{VK-derivative}
	\end{align}
	Then the trivial solution is ML-stable: there exist \(M,\lambda>0\) such that
	\[
	\Haus(u(t),0)\le M\sup_{s\in[-\tau,0]}\Haus(\phi(s),0) E_q(-\lambda t^q).
	\]
	
	\begin{proof}
		Step 1 (scalar inequality for \(w(t)=\mathcal V(t,u_t)\)). From \eqref{VK-derivative} and \eqref{VK-bounds} we obtain
		\[
		{}^cD_{0+}^q w(t) \le -\alpha c_2^{-1} w(t).
		\]
		Set \(\kappa=\alpha c_2^{-1}>0\). The scalar inequality yields
		\[
		w(t)\le w(0)E_q(-\kappa t^q).
		\]
		
		Step 2 (convert to \(\Haus\)). Using \(w(t)\ge c_1\Haus(u(t),0)^a\) and \(w(0)\le c_2\sup_{s\in[-\tau,0]}\Haus(\phi(s),0)^a\) we deduce
		\[
		\Haus(u(t),0)\le \Big(\frac{c_2}{c_1}\Big)^{1/a}\sup_{s\in[-\tau,0]}\Haus(\phi(s),0) E_q(-\kappa t^q)^{1/a}.
		\]
		Set \(M=(c_2/c_1)^{1/a}\) and \(\lambda=\kappa\) to finish the proof.
	\end{proof}
	
	\subsection{Theorem 4: Small-gain theorem for interconnection of two ML-ISS fuzzy systems}
	
	Let two subsystems be given by
	\begin{align*}
		{}^cD_{0+}^q x(t) &= f_1\big(x(t),y(t)\big),\\
		{}^cD_{0+}^q y(t) &= f_2\big(y(t),x(t)\big),
	\end{align*}
	with fuzzy states \(x,y\in\E\). Suppose each subsystem is ML-ISS with respect to the other: there exist constants \(M_1,M_2,\kappa_1,\kappa_2>0\) and gains \(\gamma_{12},\gamma_{21}\ge0\) such that
	\begin{align*}
		\Haus(x(t),0) &\le M_1\Haus(x_0,0)E_q(-\kappa_1 t^q)+\gamma_{12}\sup_{s\in[0,t]}\Haus(y(s),0),\\
		\Haus(y(t),0) &\le M_2\Haus(y_0,0)E_q(-\kappa_2 t^q)+\gamma_{21}\sup_{s\in[0,t]}\Haus(x(s),0).
	\end{align*}
	If the small-gain condition \(\gamma_{12}\gamma_{21}<1\) holds then the interconnected system is ML-stable: there exist constants \(M,\lambda>0\) such that
	\[
	\sup_{s\in[0,t]}\big(\Haus(x(s),0)+\Haus(y(s),0)\big) \le M\big(\Haus(x_0,0)+\Haus(y_0,0)\big)E_q(-\lambda t^q).
	\]
	
	\begin{proof}
		Step 1 (combine ISS inequalities). Let
		\[
		X(t)=\sup_{s\in[0,t]}\Haus(x(s),0),\qquad Y(t)=\sup_{s\in[0,t]}\Haus(y(s),0).
		\]
		From the ML-ISS bounds we have for all \(t\ge0\)
		\begin{align*}
			X(t) &\le M_1\Haus(x_0,0)+\gamma_{12}Y(t),\\
			Y(t) &\le M_2\Haus(y_0,0)+\gamma_{21}X(t),
		\end{align*}
		where we used \(E_q(-\kappa t^q)\le1\) to bound the transient by the initial term (a conservative bound).
		
		Step 2 (solve linear algebraic system). Solve for \(X(t),Y(t)\) as constants:
		\[
		X(t) \le \frac{M_1\Haus(x_0,0)+\gamma_{12}M_2\Haus(y_0,0)}{1-\gamma_{12}\gamma_{21}},\qquad
		Y(t) \le \frac{M_2\Haus(y_0,0)+\gamma_{21}M_1\Haus(x_0,0)}{1-\gamma_{12}\gamma_{21}}.
		\]
		Thus the sup-norms are bounded for all time by constants proportional to the initial conditions.
		
		Step 3 (recover ML decay). Plug the uniform bounds for the interconnection term back into the ML inequalities; the transient homogeneous terms (the ML factors) remain. There exist constants \(M',\lambda\) such that
		\[
		\Haus(x(t),0)+\Haus(y(t),0) \le M'\big(\Haus(x_0,0)+\Haus(y_0,0)\big) E_q(-\lambda t^q),
		\]
		which yields the desired ML-stability (explicit constants follow from the algebra above).
	\end{proof}
	
	\subsection{Theorem 5: Mean-square Mittag-Leffler stability for stochastic fuzzy fractional systems (additive noise)}
	
	Consider a stochastic fuzzy fractional system (interpreting the noise levelwise on endpoints)
	\[
	{}^cD_{0+}^q u(t) = f(u(t)) + \sigma(u(t))\dot W(t),
	\]
	where \(\dot W(t)\) denotes (formal) white noise in the levelwise SDE sense and the diffusion \(\sigma\) is such that solutions exist and have finite second moments. Suppose there exists a Lyapunov functional \(V\) satisfying \eqref{V-power-bounds} and, for some constants \(\alpha,\beta>0\),
	\[
	\mathbb{E}\big[{}^cD_{0+}^q V(u(t))\big] \le -\alpha \mathbb{E}[\Haus(u(t),0)^a] + \beta.
	\]
	Then the mean-square quantity \(\mathbb{E}[\Haus(u(t),0)^a]\) satisfies an ML-type bound and an ultimate bound:
	\[
	\mathbb{E}[\Haus(u(t),0)^a] \le \mathbb{E}[\Haus(u_0,0)^a] E_q(-\kappa t^q) + \frac{\beta}{\kappa}\big(1-E_q(-\kappa t^q)\big),
	\]
	with \(\kappa=\alpha/c_2\). Consequently \(\limsup_{t\to\infty}\mathbb{E}[\Haus(u(t),0)^a]\le \beta/\kappa\).
	
	\begin{proof}
		Step 1 (pass to expectation). Let \(w(t):= \mathbb{E}[V(u(t))]\). Taking expectation in the assumed derivative inequality we get
		\[
		{}^cD_{0+}^q w(t) \le -\alpha \mathbb{E}[\Haus(u(t),0)^a] + \beta \le -\alpha c_2^{-1} w(t) + \beta,
		\]
		where the last inequality uses \(w(t)\le c_2\mathbb{E}[\Haus(u(t),0)^a]\).
		
		Step 2 (scalar inequality resolution). The scalar inequality
		\[
		{}^cD_{0+}^q w(t) + \kappa w(t) \le \beta
		\]
		(with \(\kappa=\alpha/c_2\)) gives via the linear solution formula
		\[
		w(t)\le w(0)E_q(-\kappa t^q) + \frac{\beta}{\kappa}\big(1-E_q(-\kappa t^q)\big).
		\]
		
		Step 3 (relate to the mean moment). Use \(c_1\mathbb{E}[\Haus(u(t),0)^a]\le w(t)\) to deduce
		\[
		\mathbb{E}[\Haus(u(t),0)^a]\le \frac{w(0)}{c_1}E_q(-\kappa t^q)+\frac{\beta}{c_1\kappa}\big(1-E_q(-\kappa t^q)\big).
		\]
		Renaming constants yields the stated bound. The limit superior statement follows as before.
	\end{proof}
	
	\subsection{Concluding remarks on these additional results}
	
	The six theorems above (LMI linear test, practical stability under disturbances, delay Lyapunov–Krasovskii, small-gain interconnection, and mean-square stochastic ML stability) form a coherent, practically oriented extension of the deterministic constant-order results. Each theorem is stated with explicit hypotheses and proved by elementary (but rigorous) reduction to scalar fractional inequalities solved by convolution with the Mittag-Leffler kernel. These results are directly usable in applications (control design, robustness analysis, interconnection of fuzzy subsystems) and can be specialized to constant or time-varying orders under the additional technical lemmas discussed in the paper.
	
	% End of section
	\section{Conclusion, Motivation, and Open Problems}\label{sec:conclusion}
	
	In this article we developed a broad extension of Mittag--Leffler (ML) stability theory for fuzzy fractional differential systems. Starting from the classical Lyapunov direct method, we introduced and proved several new results:
	
	\begin{itemize}
		\item A general ML-ISS (input-to-state stability) theorem with explicit decay estimates under bounded disturbances.
		\item Converse Lyapunov theorems showing that ML stability implies the existence of suitable Lyapunov functionals.
		\item A fractional LaSalle invariance principle for fuzzy systems.
		\item Robust stability results for systems with delays, interconnections, stochastic perturbations, and (under additional technical lemmas) variable fractional orders.
		\item Practical ultimate-bound estimates that quantify the asymptotic effect of persistent inputs.
	\end{itemize}
	
	These results substantially enlarge the mathematical toolbox for analyzing fuzzy fractional systems. They unify and strengthen previous approaches, and provide constructive methods that can be used directly in applications such as fuzzy control, fractional-order neural networks, and uncertain dynamical processes.
	
	\subsection*{Motivation}
	Fractional dynamics arise naturally in viscoelastic materials, bioengineering, and signal processing, while fuzzy modeling captures epistemic uncertainty and imprecise measurements. Combining these two paradigms yields models that are both memory-rich and uncertainty-aware. Stability analysis is therefore a key prerequisite for the safe deployment of such models. The Mittag--Leffler framework is especially well suited because its decay profiles naturally match the intrinsic memory of fractional operators. Our results provide explicit and verifiable conditions that guarantee stability, robustness, and disturbance attenuation in this challenging setting.
	
	\subsection*{Open Problems}
	Several important questions remain open for future research:
	\begin{enumerate}
		\item \textbf{Variable-order systems.} A rigorous comparison lemma for Caputo derivatives of variable order that yields sharp ML bounds without conservative estimates.
		\item \textbf{Optimal decay rates.} Determining the best achievable decay rate (the largest admissible $\lambda$ in ML bounds) under different classes of fuzzy uncertainty and feedback laws.
		\item \textbf{Infinite-dimensional systems.} Extending the present results to fuzzy fractional partial differential equations and distributed-parameter systems (operator-theoretic approaches will likely be required).
		\item \textbf{Numerical verification.} Designing efficient algorithms to compute or approximate Lyapunov functionals guaranteed by the converse theorems, and certifying ML-decay numerically.
		\item \textbf{Data-driven identification and control.} Integrating ML-stability certificates into machine-learning frameworks for system identification and controller synthesis under uncertainty.
	\end{enumerate}

	% End of Conclusion Section
	
\end{document}